\newtheorem{theorem}{Theorem}[section]
\newtheorem{definition}[theorem]{Definition}
\newtheorem{remark}[theorem]{Remark}
\newtheorem{example}[theorem]{Example}
\begin{document}

\title{The multivariate bisection algorithm}

\author{Manuel L\' opez Galv\' an}

\author{Manuel L\'opez Galv\'an\footnote{Supported by Facultad de Ciencias Exactas y Naturales - Universidad de Buenos Aires - Argentina Intendente Güiraldes 2160 - Ciudad Universitaria - C1428EGA - Tel. (++54 +11)       			4576-3300 and IAM Instituto Argentino de Matem\'atica (CONICET) Saavedra 15 3rd floor - C1083ACA Buenos Aires Argentina  Tel.: 54 11 4954-6781.}}

\maketitle

\begin{abstract}
The aim of this paper is the study of the bisection method in $\mathbb{R}^n$. In this work we propose a multivariate bisection method supported by the Poincar\'e-Miranda theorem in order to solve non-linear system of equations. Given an initial cube verifying the hypothesis of Poincar\'e-Miranda theorem the algorithm performs congruent refinements throughout its center by generating a root approximation. Throughout preconditioning we will prove the local convergence of this new root finder methodology and moreover we will perform a numerical implementation for the two dimensional case. 

\end{abstract}

\section{Introduction}
The problem of finding numerical approximations to the roots of a non-linear system of equations was subject of various studies, different methodologies have been proposed between optimization and Newton's procedures. In \cite{Lehmer} D. H. Lehmer proposed a method for solving polynomial equations in the complex plane testing increasingly smaller disks for the presence or absence of roots. In other work, Herbert S. Wilf developed a global root finder of polynomials of one complex variable inside any rectangular region using Sturm sequences\cite{Wilf}.    

The classical Bolzano's theorem or Intermediate Value theorem ensure that a continuous function that changes sign in an interval has a root, that is, if $f:[a,b] \rightarrow \mathbb{R}$ is continuous and $f(a)f(b)<0$ then there exist $c \in (a,b)$ that $f(c)=0$. In the multidimensional case the generalization of this result is the known  Poincar\'e-Miranda theorem that ensures that if we have $f_1,...,f_n$ $n$-continuous functions of $n$ variables $x_1,...,x_n$ and the variables are subjected to vary between $a_i$ and $-a_i$ then if $f_i(x_1,..,a_i,..,x_n)f_i(x_1,..,-a_i,..,x_n)<0$ for all $x_i$ then there exist $c \in [-a_i, a_i]^{n}$ such that $f(c)=0$. This result was announced the first time by Poincar\'e in 1883 \cite{Poincare1} and published in 1884 \cite{Poincare2} with reference to a proof using homotopy invariance of the index. The result obtained by Poincar\'e has come to be known as the theorem of Miranda, who in 1940 showed that it is equivalent to the Brouwer fixed point \cite{Miranda}. For different proofs of the Poincar\'e-Miranda theorem in the $n$-dimensional case, see \cite{Kulpa}, \cite{Rouche}. 

\begin{theorem} \label{Poin-Mir_Teo} (Poincar\'e-Miranda theorem). Let $K$ be the cube 
$$K=\lbrace x\in \mathbb{R}^{n} : \vert x_j-\hat{x_j} \vert \leq \rho , \ j=1(1)n\rbrace$$
where $\rho \geq 0$ and $\mbox{\bf{F}}=(f_1,f_2,..,f_n): K \rightarrow \mathbb{R}^n$ a continuous map on $K$. Also let,
$$F_i^+=\lbrace x \in K: x_i=\hat{x_i} + \rho \rbrace, \  F_i^-=\lbrace x \in K: x_i=\hat{x_i} - \rho \rbrace$$
be the pairs of parallel opposite faces of the cube $K$. 

If for $i= 1 (1)n$ the $i$-th component $f_i$ of $\mbox{\bf{F}}$ has opposite sign or vanishes on the corresponding opposite faces $F_i^+$ and $F_i^-$ of the cube $K$, i.e.     
\begin{eqnarray} 
f_i(x)f_i(y)\leq 0, \ x\in F_i^+, y\in  F_i^- \label{PM}
\end{eqnarray} 
then the mapping $\mbox{\bf{F}}$ has at least one zero point $r=(r_1,r_2,..,r_n)$ in K.      
\end{theorem}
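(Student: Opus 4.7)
The plan is to reduce Theorem~\ref{Poin-Mir_Teo} to the Brouwer fixed-point theorem (or equivalently to topological degree) in two stages: first handle the case of \emph{strict} sign reversals on opposite faces, then remove the strictness by a perturbation-plus-compactness argument.

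To reduce to the strict case, I would set $F_\epsilon(x) = F(x) - (\epsilon/\rho)(x-\hat{x})$ for $\epsilon > 0$. On $F_i^+$ the $i$-th component becomes $f_i(x) - \epsilon \leq -\epsilon$, and on $F_i^-$ it becomes $f_i(x) + \epsilon \geq \epsilon$, so $F_\epsilon$ satisfies the strict form of (\ref{PM}). If the strict version is already in hand, each $F_\epsilon$ has a zero $x_\epsilon \in K$; by compactness of $K$ a subsequence converges to some $x^* \in K$, and since $F(x_\epsilon) = (\epsilon/\rho)(x_\epsilon-\hat{x}) \to 0$, continuity of $F$ yields $F(x^*) = 0$.

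For the strict version I would translate and rescale so that $\hat{x} = 0$ and $K = [-1,1]^n$, and argue by contradiction assuming $F$ has no zero in $K$. Then $\phi(x) = F(x)/\|F(x)\|$ is a well-defined continuous map $K \to S^{n-1}$, and since $K$ is contractible, its restriction $\phi|_{\partial K}$ extends to $K$ and is therefore null-homotopic, hence of degree zero. I would then consider the straight-line homotopy $H_t(x) = (1-t)F(x) - tx$ for $t \in [0,1]$ on $\partial K$, and check that $H_t(x) \neq 0$ throughout: a zero would force $F(x) = \frac{t}{1-t}x$ for some $t \in (0,1)$, i.e.\ $F(x)$ to be a positive scalar multiple of $x$, which is ruled out on $F_i^+$ (where $x_i = 1 > 0$ while $f_i(x) < 0$) and symmetrically on $F_i^-$. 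Consequently $\phi|_{\partial K}$ is homotopic as a map to $S^{n-1}$ to $x \mapsto -x/\|x\|$, whose degree is $(-1)^n \neq 0$, contradicting the previous vanishing.

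The step I expect to be the most delicate is the homotopy/degree computation: one must invoke homotopy invariance of the Brouwer degree (or equivalent fixed-point machinery) and verify non-vanishing of $H_t$ carefully on all of $\partial K \times [0,1]$. A Brouwer-only route that avoids explicit degree theory is to apply the fixed-point theorem to the self-map $G(x) = \pi_K(x - \lambda F(x))$, where $\pi_K$ is the componentwise projection onto $K$ and $\lambda > 0$ is small; the subtle point there is to use the boundary sign condition to show that any boundary fixed point of $G$ still forces $F = 0$, which is precisely what the hypothesis (\ref{PM}) is tailored to deliver.
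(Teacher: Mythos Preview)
The paper does not supply its own proof of Theorem~\ref{Poin-Mir_Teo}; it merely states the result, notes Miranda's 1940 observation that it is equivalent to the Brouwer fixed-point theorem, and refers the reader to \cite{Kulpa} and \cite{Rouche} for proofs. So there is no in-paper argument to compare against.

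Your sketch follows one of the standard routes (reduce to strict inequalities, then use homotopy invariance of degree against the identity/antipodal map), and the degree computation in your second paragraph is correct as written. Two small points deserve care. First, your perturbation step silently assumes $f_i\le 0$ on $F_i^+$ and $f_i\ge 0$ on $F_i^-$; the hypothesis \eqref{PM} is a \emph{product} condition and is slightly weaker. It does force, for each $i$, either that $f_i$ has a definite weak sign on each face (opposite on the two), in which case a sign flip reduces to your assumption, or that $f_i\equiv 0$ on one of the two faces while possibly changing sign on the other. In that degenerate subcase your linear perturbation $F_\epsilon(x)=F(x)-(\epsilon/\rho)(x-\hat x)$ need not produce strict inequalities on the face where $f_i$ changes sign, so you should either (i) treat that case separately (e.g.\ reduce dimension on the face where $f_i\equiv 0$), or (ii) bypass the strict reduction entirely and run the homotopy $H_t=(1-t)F(x)-t(x-\hat x)$ directly: the non-vanishing check on $\partial K\times(0,1)$ only needs the weak inequality together with $x_i-\hat x_i=\pm\rho\neq 0$, which already rules out $f_i(x)=\frac{t}{1-t}(x_i-\hat x_i)$ when $f_i$ has the opposite weak sign, and one then handles possible boundary zeros of $F$ by the same limiting argument. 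Second, the case $\rho=0$ (allowed in the statement) makes $K$ a single point and your formula divides by $\rho$; this case is of course trivial since \eqref{PM} then reads $f_i(\hat x)^2\le 0$.
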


Throughout this paper we will recall the opposite signs condition \ref{PM} as the Poincar\'e-Miranda property P.M.. The aim of this work is to develop a bisection method that allows us solve non-linear system of equations $\mbox{\bf{F}}(X)=0, \ X=(x_1,x_2,..,x_n)$ using the above Poincar\'e-Miranda theorem. The idea of the algorithm will be  similar as in the classical one dimensional algorithm, we perform refinements of the cube domain in order to check the sign conditions on the parallel faces. In one dimension it is clear that an initial sign change in the border of an interval produces other sign change in a half partition of it but in several dimension we cannot guarantee that the Poincar\'e-Miranda conditions maintain after a refinement. Even if $r$ is an exact solution, there may not be any such $K$ (for which \ref{PM} holds). However, J. B. Kioustelidis \cite{KIOUSTELIDIS} has pointed out that, for $\hat{x}$ close to a simple solution (where the Jacobian is nonsingular) of $\mbox{\bf{F}}(X)$, Miranda's theorem will be applicable to some equivalent system for suitable $K$. Therefore in case of a fail in the sign conditions with the original system, we should try to transform it. The idea will be find an equivalent system throughout non-linear preconditioning where the equations are better balanced in the sense that the new system could be close to some hyperplane in order to improve the chances to check the sign conditions in some member of the refinement.

We will denote the infinite norm by $\Vert x \Vert_{\infty}=\max \lbrace\vert x_1\vert,..,\vert x_n\vert \rbrace$, the Euclidean norm by $\Vert x \Vert_2=\sqrt{x_1^2+..+x_n^2}$ and the $1$-norm by $\Vert x \Vert_1=\vert x_1 \vert +..+\vert x_n \vert$. Given a vector norm on $\mathbb{R}^{n}$, the associated matrix norm for a matrix $M \in \mathbb{R}^{n\times n}$ is defined by $$\Vert M \Vert_p = \max_{\Vert x \Vert_p = 1} \Vert Mx \Vert_p  \ \mbox{where}  \ p=\infty, 2 \ \mbox{or} \ 1$$   
It is know that in the case of the $\infty$-matrix norm it can be expressed as a maximum sum of its row, that is if $M=(m_{ij})$ then $\Vert M \Vert_{\infty} = \max_{i=1,..,n} \sum_{j=1}^{n} \vert m_{ij}\vert$, therefore it is easy to see that a sequence of matrices $(M_k)_k$ converge if and only if their coordinates converge. 
Since the domains involved are multidimensional cubes, the most proper norm to handle the distance will be the  $\infty$-norm.

We will accept $r$ as a root with a small tolerance level $\delta$ if $\Vert \mbox{\bf{F}}(r) \Vert_p \leq \delta$.

\section{The algorithm and its description}
This section gives a step-by-step description of our algorithm, the core of it meets in the classical bisection algorithm in one dimension. 
\begin{definition} A $2^n$-refinement of a cube $K\subset \mathbb{R}^n$ is a refinement into $2^n$ congruent cubes $Q= \lbrace K^1, K^2,..,K^{2^n}  \rbrace$.



\begin{figure}[H]  
\begin{center}
\includegraphics[height=1.1in,width=1.1in]{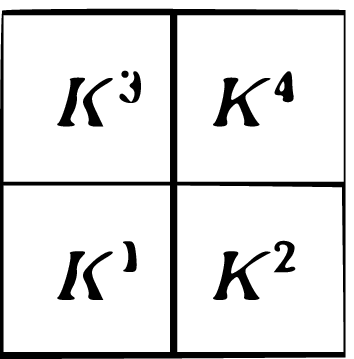}\\ 
\caption{4-refinement in $\mathbb{R}^2$.}
\includegraphics[height=1.2in,width=1.2in]{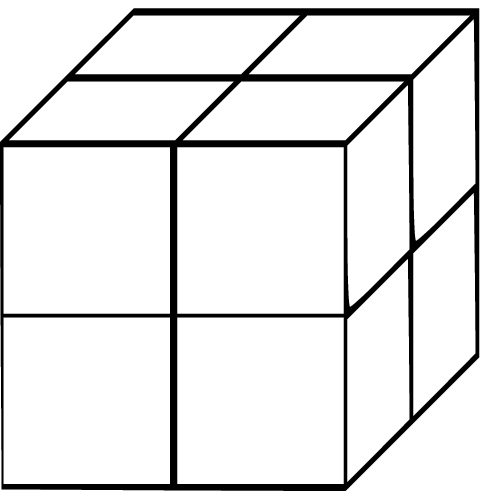}\\ 
\caption{8-refinement in $\mathbb{R}^3$.}
\end{center}
\end{figure}

We say that a $2^n$-refinement of $Q$ verifies the Poincar\'e-Miranda condition if there exist $K^l \in Q$ such that $\mbox{\bf{F}} : K^l \rightarrow \mathbb{R}^n$ verifies the condition of Theorem \ref{Poin-Mir_Teo}.    
\end{definition}

Given a system  $\mbox{\bf{F}}(X)=0$, the preconditioned system is $\mbox{\bf{G}}(X)=M\mbox{\bf{F}}(X)=0$ for some matrix $M$ such that the jacobian at $X_0$ verifies $D\mbox{\bf{G}}(X_0)=\mbox{Id}$. Since $D\mbox{\bf{G}}(X_0)=MD\mbox{\bf{F}}(X_0)$ it turns out that $M=D\mbox{\bf{F}}(X_0)^{-1}$ and it is clear that the preconditioned system is an equivalent system of $\mbox{\bf{F}}$ and both have the same roots. After preconditioning, the equations in $\mbox{\bf{G}}(X)=0$ are close to a hyperplane having equation $x=k_1$ and $y=k_2$, where $k_i$ is some constant. This fact comes from the Taylor expansion of $\mbox{\bf{G}}$ around $X_0$, indeed if $X$ is close to $X_0$ then  
$$\mbox{\bf{G}}(X)\approx \mbox{\bf{G}}(X_0) + D\mbox{\bf{G}}(X_0)(X-X_0) = \mbox{\bf{G}}(X_0) + X-X_0$$         
and therefore it is clear that the equations are close to some hyperplane. Moreover if $X_0$ is nearly a zero point of $\mbox{\bf{F}}$ then, $$\mbox{\bf{G}}(X)\approx X-X_0$$ and therefore it will behave like the components of $X - X_0$, and take nearly opposite values on the corresponding opposite faces of the cube.

{\bf \subsection{Algorithm procedure}}
The multivariate bisection algorithm proceeds as follows:
\begin{enumerate}
\item We start choosing  an initial guess $K_0=[a_{1_0},b_{1_0}]\times [a_{2_0},b_{2_0}]\times ...\times [a_{n_0},b_{n_0}] \subset \mathbb{R}^n$ verifying the Poincar\'e-Miranda condition on $\mbox{\bf{F}}$.
\item We locate the center $$c_1=\bigg( \frac{a_{1_0}+ b_{1_0}}{2}, \frac{a_{2_0}+ b_{2_0}}{2}, ..., \frac{a_{n_0}+ b_{n_0}}{2} \bigg)$$ of $K_0$. 
\item Generate a first $2^n$-refinement $Q_1$ through $c_1$.   
\item If $Q_1$ verifies the Poincar\'e-Miranda condition, let $K_1=[a_{1_1},b_{1_1}]\times [a_{2_1},b_{2_1}]\times ...\times [a_{n_1},b_{n_1}] $ be the quarter of $Q_1$ where the conditions of Theorem \ref{Poin-Mir_Teo} are verified, we chose $$c_2=\bigg(  \frac{a_{1_1}+ b_{1_1}}{2}, \frac{a_{2_1}+ b_{2_1}}{2}, ..., \frac{a_{n_1}+ b_{n_1}}{2}   \bigg)$$ the center of $K_1$. 
If $Q_1$ does not verify the Poincar\'e-Miranda condition we preconditioning the system in $c_1$ setting $$\mbox{\bf{G}}_1(X):=D\mbox{\bf{F}}(c_1)^{-1}\mbox{\bf{F}}(X)$$  
and then we check again the sign conditions with the preconditioned system $\mbox{\bf{G}}_1(X)$ in $Q_1$.
 
This recursion is repeated while the Poincar\'e-Miranda condition are verified, generating a sequence of equivalent system \[
\mbox{\bf{G}}_k(X):= \left\{ \begin{array}{lcl}
\mbox{\bf{G}}_{k-1}(X) \ \mbox{ if} \ \mbox{\bf{G}}_{k-1} \ \mbox{verifies P.M. in} \ Q_k \\
& & \\
D\mbox{\bf{G}}_{k-1}(c_{k})^{-1}\mbox{\bf{G}}_{k-1}(X) \ \mbox{if} \ \mbox{\bf{G}}_{k-1} \ \mbox{does not verify P.M. in} \ Q_k
\end{array}
\right.
\] and a decreasing cube sequence $K_k$, such that
\begin{eqnarray} 
 & K_{k+1} \subset K_k= [a_{1_k},b_{1_k}]\times ...\times [a_{n_k},b_{n_k}] \ \ \mbox{where the vertices verify }  \nonumber\\
 & a_{j_0}\leq   a_{j_1} \leq  a_{j_2}  \leq  \ldots   \leq  a_{j_k} \leq \ldots  \leq   b_{j_0} \label{x1} \\
 & b_{j_0}\geq  b_{j_1} \geq  b_{j_2} \geq  \ldots  \geq b_{j_k} \geq \ldots \geq  a_{j_0} \label{x2}
 \end{eqnarray}
 for each $j=1(1)n$ and where the length of the current interval $[a_{j_k},b_{j_k}]$ is a half of the last iteration, 
\begin{eqnarray} 
& a_{j_k}-b_{j_k}  = \dfrac{a_{j_{k-1}}-b_{j_{k-1}}}{2} = \ldots = \dfrac{a_{j_0}-b_{j_0}}{2^k} \label{des_x} 
\end{eqnarray}
The root's approximation after $k$-th iteration will be, $$c_k=\bigg( \frac{a_{1_k}+ b_{1_k}}{2}, \frac{a_{2_k}+ b_{2_k}}{2}, ..., \frac{a_{n_k}+ b_{n_k}}{2} \bigg)$$
and the method is stopped until the zero's estimates gives sufficiently accuracy or until the Poincar\'e-Miranda condition leaves to maintain. 
\end{enumerate}

\begin{remark} It is easy to see that the $k$th-preconditioning system $\mbox{\bf{G}}_{k}(X)=(g_{1_k}(X),..,g_{n_k}(X))$ can be expressed as $D\mbox{\bf{F}}(c_{k})^{-1}\mbox{\bf{F}}(X)$, indeed, by induction suppose that it is true for $k-1$, then differencing and valuing in $c_k$ we have, $$\mbox{\bf{G}}_{k}(X)=D\mbox{\bf{G}}_{k-1}(c_{k})^{-1}\mbox{\bf{G}}_{k-1}(X)$$$$=D\mbox{\bf{G}}_{k-1}(c_{k})^{-1}D\mbox{\bf{F}}(c_{k-1})^{-1}\mbox{\bf{F}}(X)= D\mbox{\bf{F}}(c_k)^{-1} D\mbox{\bf{F}}(c_{k-1}) D\mbox{\bf{F}}(c_{k-1})^{-1}\mbox{\bf{F}}(X)$$
$$= D\mbox{\bf{F}}(c_k)^{-1}\mbox{\bf{F}}(X).$$
  
\end{remark}

Since we cannot always ensure that a refinement of a given cube will verify the Poincar\'e-Miranda condition, we cannot ensure the converge for any map that only has a sign change in a given initial cube. So, in case of a fail in the sign conditions in some step, we try to rebalance the system using preconditioning in the center of the current box recursion. The preconditioning allows us to increase the chances to be more often in the sign conditions and therefore keep going with the quadrisection procedure in order to get a better root's approximation. In \cite{KIOUSTELIDIS}, J. B. Kioustelidis found sufficient conditions for the validity of the Poincar\'e-Miranda Miranda condition for preconditioning system, there it was proved that the sign conditions are always valid if the center of the cube $K$ is close enough to some root of $\mbox{\bf{F}}$. So, if we start the multivariate bisection algorithm with an initial guess close to some root, Kioustelidis's theorem will guarantee the validity of Poincar\'e-Miranda in each step of our method allowing the local convergence of it.    

In the next theorem we will prove the local convergence for the multivariate bisection algorithm when we preconditioning in each step.

\begin{theorem}\label{conv_teo} Let $\mbox{\bf{F}}=(f_1,...,f_n) : K_0 \rightarrow \mathbb{R}^n$ be a $C^2$ map defined on the cube $K_0=\lbrace x\in \mathbb{R}^{n} : \Vert x-c_1 \Vert_{\infty} \leq \rho \rbrace=[a_{1_0},b_{1_0}]\times..\times [a_{n_0},b_{n_0}]$ with $\rho$ small enough verifying the Poincar\'e-Miranda sign condition; assume that $D\mbox{\bf{F}}(X)$ is invertible for all $X \in K_0$, furthermore suppose that we perform the preconditioning in each step then the multivariate bisection algorithm generates a sequence $c_k$ such that
\begin{enumerate}
\item Starting at $K_0$, $c_k \stackrel{\Vert .\Vert}\longrightarrow r$ with $\mbox{\bf{F}}(r)=0$.
\item $\Vert c_k - r\Vert_2 \leq \dfrac{\sum^n_{j=1} b_{j_0} - a_{j_0}  }{2^{k}}$.    
\end{enumerate}
\end{theorem}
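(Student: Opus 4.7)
The plan is to combine Kioustelidis's theorem \cite{KIOUSTELIDIS} with the nested compact cubes principle. By the Remark, $\mbox{\bf{G}}_k(X)=D\mbox{\bf{F}}(c_k)^{-1}\mbox{\bf{F}}(X)$, so $D\mbox{\bf{G}}_k(c_k)=\mbox{Id}$ and, since $D\mbox{\bf{F}}$ is invertible throughout $K_0$, the zero sets of $\mbox{\bf{G}}_k$ and $\mbox{\bf{F}}$ coincide inside $K_0$. I would carry out the proof in three steps: (i)~verify that the hypothesis ``$\rho$ small enough'' guarantees the algorithm never halts, so that $(K_k)$ is well defined for every $k$; (ii)~show that $\bigcap_k K_k$ collapses to a single point which is a zero of $\mbox{\bf{F}}$; and (iii)~derive the explicit rate.

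For~(i), I would exploit the $C^2$ regularity to write the second-order Taylor expansion
\begin{equation*}
\mbox{\bf{G}}_k(X)=\mbox{\bf{G}}_k(c_k)+(X-c_k)+R_k(X),\qquad \Vert R_k(X)\Vert_\infty\leq C\Vert X-c_k\Vert_\infty^{2}.
\end{equation*}
Because $\mbox{\bf{F}}(r)=0$ and $D\mbox{\bf{G}}_k(c_k)=\mbox{Id}$, the vector $\mbox{\bf{G}}_k(c_k)$ equals $c_k-r$ up to higher-order terms, and in particular $\Vert\mbox{\bf{G}}_k(c_k)\Vert_\infty$ is strictly smaller than the half-edge of $K_{k-1}$. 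A direct sign analysis on a pair of parallel faces of the candidate quarter then shows that, provided $\rho$ is small enough so that the quadratic remainder is dominated by the linear term, choosing the quarter whose $j$-th offset opposes the sign of $g_{k,j}(c_k)$ produces strictly opposite signs on the corresponding parallel faces in every coordinate direction. This is the quantitative content of Kioustelidis's theorem for the preconditioned system, and it propagates uniformly through every iteration since the subsequent cubes only shrink.

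For~(ii), the nesting (\ref{x1})--(\ref{x2}) together with (\ref{des_x}) exhibits $(K_k)$ as a decreasing sequence of compact cubes whose $\Vert\cdot\Vert_\infty$-diameter tends to zero, so $\bigcap_k K_k=\{r\}$. At each iteration Poincar\'e--Miranda applied to $\mbox{\bf{G}}_k$ on $K_k$ furnishes a zero of $\mbox{\bf{G}}_k$ inside $K_k$, which is also a zero of $\mbox{\bf{F}}$ by invertibility, so continuity of $\mbox{\bf{F}}$ forces $\mbox{\bf{F}}(r)=0$. For~(iii), $c_k$ is the midpoint of $K_{k-1}$ and hence one of the $2^n$ vertices of $K_k$; since $r\in K_k$ and the $j$-th edge of $K_k$ has length $(b_{j_0}-a_{j_0})/2^k$ by (\ref{des_x}), one gets $|c_k(j)-r(j)|\leq(b_{j_0}-a_{j_0})/2^k$ for each $j$, and the elementary inequality $\Vert\cdot\Vert_2\leq\Vert\cdot\Vert_1$ delivers the stated bound.

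The main obstacle is step~(i): the assertion that the Poincar\'e--Miranda condition for the preconditioned system can actually be realized at every iteration is the real content of the theorem. The apparent circularity—needing $c_k$ close to $r$ to invoke Kioustelidis, yet needing Kioustelidis to drive $c_k$ toward $r$—is broken by the smallness of $\rho$: it ensures that $c_1$ already lies inside the Kioustelidis neighborhood of $r$, and every subsequent $c_k$ remains inside it because the cubes shrink geometrically.
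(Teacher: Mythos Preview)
Your proposal is correct and shares the paper's overall architecture: Kioustelidis's result for step~(i), Cantor's nested-intersection principle for the limit point in~(ii), and the coordinate-wise bound together with $\Vert\cdot\Vert_2\leq\Vert\cdot\Vert_1$ for~(iii). The one substantive difference lies in how you conclude $\mbox{\bf{F}}(r)=0$. The paper fixes the limit point $r=(r_1,\ldots,r_n)$, evaluates the Poincar\'e--Miranda inequalities $g_{j_k}(\ldots,a_{j_k},\ldots)\,g_{j_k}(\ldots,b_{j_k},\ldots)\leq 0$ at the remaining coordinates of $r$, establishes the pointwise convergence $g_{j_k}\to g_j$ by first proving $D\mbox{\bf{F}}(c_k)^{-1}\to D\mbox{\bf{F}}(r)^{-1}$ in the matrix $\infty$-norm, and then lets $k\to\infty$ to obtain $g_j(r)^2\leq 0$. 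Your route---each $K_k$ contains a zero $z_k$ of $\mbox{\bf{F}}$ by Poincar\'e--Miranda applied to $\mbox{\bf{G}}_k$ together with the invertibility of $D\mbox{\bf{F}}(c_k)$, and $z_k\to r$ because the diameters shrink---is shorter and sidesteps the auxiliary convergence of the preconditioning matrices entirely. Both are valid; yours is more economical, while the paper's makes explicit the limiting preconditioned system $D\mbox{\bf{F}}(r)^{-1}\mbox{\bf{F}}$, a piece of structure not actually needed for the statement. One small caution on your step~(i): the root you invoke to control $\Vert\mbox{\bf{G}}_k(c_k)\Vert_\infty$ need not be the same at every stage (it is whatever zero Poincar\'e--Miranda hands you inside $K_{k-1}$), but this does not affect the argument since the relevant Kioustelidis estimate only requires \emph{some} zero within the current cube.
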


\begin{proof}\leavevmode 
\begin{enumerate}
\item The Poincar\'e-Miranda sign conditions guarantee the existence of a root inside $K_0$ and given a refinement $Q_1$ of $K_0$ since $\rho$ is small enough Item c of Theorem 2 in \cite{KIOUSTELIDIS} guarantees the validity of Poincar\'e-Miranda sign conditions for a member of $Q_1$. Performing successive refinements we will always find a member $K_k$ of the refinement $Q_k$ verifying the sign conditions for the preconditioned system $\mbox{\bf{G}}_{k}(X)$. 
For each $j=1(1)n$ the sequences $(a_{j_k})_k, (b_{j_k})_k$ are monotones and bounded and therefore they converge. From equation \ref{des_x} we have for each $j=1(1)n$,  
\begin{eqnarray} 
&   \displaystyle{\lim_{k \rightarrow \infty}} a_{j_k} =\lim_{k \rightarrow \infty} b_{j_k} = r_j  \label{xlim} 
\end{eqnarray}
       
and from the border conditions, 
\begin{eqnarray}
&   g_{1_k}(a_{1_k},x_2,..,x_n)g_{1_k}(b_{1_k},x_2,..,x_n)   \leq 0  \ \ \forall \  x \in  \widehat{K_k}^1 \label{desf_1} \\ 
&   ........  \nonumber \\
&   g_{j_k}(x_1,..,x_{j-1},a_{j_k},x_{j+1},..,x_n)g_{j_k}(x_1,..,x_{j-1},b_{j_k},x_{j+1},..,x_n)   \leq 0  \  \forall \  x \in  \widehat{K_k}^j   \nonumber \\
&   ........\nonumber \\
&   g_{n_k}(x_1,..,x_{n-1},a_{n_k})g_{n_k}(x_1,..,x_{n-1},b_{n_k})   \leq 0  \ \ \forall \ \   x \in  \widehat{K_k}^n   \nonumber
\end{eqnarray}

where $\widehat{K_k}^j$ means that the $j$ coordinate of $K_k$ is omitted.
Since the diameter of $K_k$ tends to zero by Cantor's intersection theorem the intersection of the $K_k$ contains exactly one point, $$\lbrace p \rbrace=\bigcap_{n=0}^{\infty} K_k$$
and the equations \ref{xlim} guarantee that $p=(r_1,..,r_n)$. Then, we can evaluate equations \ref{desf_1} in $p=(r_1,..,r_n)$ getting, 

\begin{eqnarray}\label{desf_2}
& g_{1_k}(a_{1_k},r_2,..,r_n)g_{1_k}(b_{1_k},r_2,..,r_n)   \leq 0  \ \forall \ k \in \mathbb{N} \\ 
& ........ \nonumber \\
& g_{n_k}(r_1,..,r_{n-1},a_{n_k})g_{n_k}(r_1,..,r_{n-1},b_{n_k})   \leq 0  \ \forall \ k  \in \mathbb{N} \nonumber
\end{eqnarray}

It is clear that, 
$$c_k=\bigg( \frac{a_{1_k}+ b_{1_k}}{2}, \frac{a_{2_k}+ b_{2_k}}{2}, ..., \frac{a_{n_k}+ b_{n_k}}{2} \bigg) \underset{k\rightarrow \infty}\longrightarrow (r_1,..,r_n)=r$$
then by the continuity of $D\mbox{\bf{F}}$ and the continuity of the inversion in the $\infty$-matrix norm we have $$D\mbox{\bf{F}}(c_k)^{-1}\rightarrow D\mbox{\bf{F}}(r)^{-1}$$
Let $G(X)=D\mbox{\bf{F}}(r)^{-1}F(X)=(g_1(X),..,g_n(X))$, since $$\Vert D\mbox{\bf{F}}(c_k)^{-1}F(X)- D\mbox{\bf{F}}(r)^{-1}F(X)\Vert_{\infty} \leq \Vert D\mbox{\bf{F}}(c_k)^{-1}- D\mbox{\bf{F}}(r)^{-1}\Vert_{\infty} \Vert F(X) \Vert_{\infty} \rightarrow 0$$ for each $X \in K_0$ then we get the punctual convergence  for each coordinate function $$g_{j_k}(X)\underset{k\rightarrow \infty}\rightarrow g_j(X).$$ 
From equations \ref{xlim} we have, $$g_{j_k}(r_1,..,r_{j-1},a_{j_k},r_{j+1},..,r_n) \rightarrow g_j(r_1,..,r_j,..,r_n) \ \mbox{for each} \ j=1(1)n$$
$$g_{j_k}(r_1,..,r_{j-1},b_{j_k},r_{j+1},..,r_n) \rightarrow g_j(r_1,..,r_j,..,r_n)  \ \mbox{for each} \ j=1(1)n,$$ therefore taking limit in equations \ref{desf_2} we get
$$g_{j}(r_1,r_2,..,r_n)^2   \leq 0 \ \forall j=1(1)n  $$ and finally it is clear that $\mbox{\bf{F}}(r_1,..,r_n)=0$.

\item
Let $(c_{j_k})_k \ j=1(1)n$, be the coordinates of the sequence $(c_k)_k$ we have following estimation 
\begin{eqnarray}
& \vert c_{j_k} -r_j\vert \leq \dfrac{b_{j_0} - a_{j_0}}{2^k} \ \mbox{for each} \ j=1(1)n. 
\end{eqnarray}

Indeed, since the sequences $(a_{j_k})$ and $(b_{j_k})$ are monotones and bounded by $r_j$ we get for each $j=1(1)n$,
\begin{eqnarray*}
& c_{j_k} - r_j=\dfrac{a_{j_{k-1}}}{2} +\dfrac{b_{j_{k-1}}}{2} - r_j &\leq  \dfrac{a_{j_{k-1}}}{2} +\dfrac{b_{j_{k-1}}}{2} - a_{j_{k-1}} \\
&  & =\dfrac{b_{j_{k-1}}}{2}  - \dfrac{a_{j_{k-1}}}{2} =\dfrac{b_{j_0}-a_{j_0}}{2^k} 
\end{eqnarray*}
On the other hand, 
\begin{eqnarray*}
& c_{j_k} - r_j=\dfrac{a_{j_{k-1}}}{2} +\dfrac{b_{j_{k-1}}}{2} - r_j &\geq  \dfrac{a_{j_{k-1}}}{2} +\dfrac{b_{j_{k-1}}}{2} - b_{j_{k-1}} \\
&  & =-\big(\dfrac{b_{j_{k-1}}}{2} - \dfrac{a_{j_{k-1}}}{2}\big) =-\big(\dfrac{b_{j_0}-a_{j_0}}{2^k} \big)
\end{eqnarray*}
Therefore,
\begin{eqnarray*}
& \Vert c_k - r\Vert_2 = \sqrt{\sum^{n}_{j=1}(c_{j_k} -r_j)^2} \leq \sum^{n}_{j=1} \vert c_{j_k} -r_j\vert  \\
& \leq \dfrac{\sum^{n}_{j=1} b_{j_0} - a_{j_0}}{2^k}
\end{eqnarray*}  
\end{enumerate}
\end{proof}

As in the classical one dimensional bisection algorithm Item 2 of theorem \ref{conv_teo} gives a way to determine the number of iterations that the bisection method would need to converge to a root to within a certain tolerance. The number of iterations needed, $k$, to achieve the given tolerance $\delta$ is given by,
$$ k=\log_2 \bigg( \frac{\sum^n_{j=1} b_{j_0} - a_{j_0} }{\delta} \bigg)=\dfrac{\log \big( \sum^n_{j=1} b_{j_0} - a_{j_0} \big ) - \log \delta }{\log 2}$$   

The following example shows an infinite application of the bisection algorithm in $\mathbb{R}^2$ with non-preconditioning. 
\begin{example} \label{conv_example} Consider the map $\mbox{\bf{F}}(x,y)=(y+x-1, y-e^{-x^2})$, we start checking the Poincar\'e-Miranda condition on $K_0=[0,1]\times [0,1]$,      
\begin{eqnarray*}
& f_1(0,y)=y-1 \leq 0, \  \  f_1(1,y)= y \geq 0 \\
& f_2(x,0)=-e^{-x^2} < 0, \ \  f_2(x,1)=1-e^{-x^2}\geq 0 
\end{eqnarray*}
then if we considerate the $K^3$ quarter of each quadrisection, we can check that it always will verify the Poincar\'e-Miranda condition and therefore it is not necessary preconditioning in each step. Let $a_{1_{k}}=0$, $b_{1_{k}}=\frac{1}{2^k}$, $a_{2_{k}}=1-\frac{1}{2^k}$ and $b_{2_{k}}=1$ be the coordinates of the $k$-th $K^3$ quarter rectangle, we have to prove that $$f_1(a_{1_{k}},y)f_1(b_{1_{k}},y)=(y-1)(\frac{1}{2^k}+y-1)\leq 0, \   1-\frac{1}{2^k}\leq y\leq 1.$$
$$f_2(x,a_{2_{k}})f_2(x,b_{2_{k}})=(1-\frac{1}{2^k}-e^{-x^2})(1-e^{-x^2})\leq 0, \   0\leq x\leq\frac{1}{2^k}$$
Indeed, the first inequality is clear and follows directly from the domain of $y$, the second follows from the fact that the domain of $x$ implies that  $$1-\frac{1}{2^k}\leq 1-x \leq e^{-x^2} $$ getting $$1-\frac{1}{2^k}-e^{-x^2}\leq 0 ,  \  k \in \mathbb{N} \ \mbox{and} \ 1-e^{-x^2}\geq 0$$
 The $k$-th root's approximation is, $$c_k= \bigg(\dfrac{\dfrac{1}{2^{k-1}}}{2}, \dfrac{1-\dfrac{1}{2^{k-1}}+1}{2} \bigg)=\bigg( \dfrac{1}{2^k}, 1-\dfrac{1}{2^k}\bigg) \displaystyle{\longrightarrow_{k\rightarrow \infty}} (0,1)$$ 
and the error verifies $$\Vert c_k - (0,1)\Vert =  \sqrt{\bigg(\dfrac{1}{2^k}\bigg)^2 + \bigg(\dfrac{1}{2^k}\bigg)^2}= \dfrac{\sqrt{2}}{2^k}\leq \dfrac{2}{2^{k}}$$
\end{example}

\section{Implementation, performance and testing}
Throughout this section we will focus in the implementation and performance of the bisection algorithm in $\mathbb{R}^2$. The bisection algorithm was developed in \textsc{Matlab} in a set of functions running from a main function. In order to check the P.M. conditions for the function $\mbox{F}=(f_1,f_2)$ we need to compute the intervals $f_i(F_i^+), f_i(F_i^-)$ ($i=1,2$) and one way to achieve this is by using Interval Analysis (IA). 
IA was marked by the appearance of the book Interval Analysis by Ramon E. Moore in 1966 \cite{Moore} 
and it gives a fast way to find an enclosure for the range of the functions. A disadvantage of IA is the well known overestimation. If intervals $f_i(F_i^+), f_i(F_i^-)$ are available then the P.M. follows from the condition 
\begin{eqnarray}
& \sup \lbrace y : y \in f_i(F_i^-) \rbrace \leq 0 \leq \inf  \lbrace y : y \in f_i(F_i^+) \rbrace \label{PMintform1} \\     
& or \nonumber \\ 
& \sup \lbrace y : y \in f_i(F_i^+) \rbrace \leq 0 \leq \inf  \lbrace y : y \in f_i(F_i^-) \rbrace \label{PMintform2}
\end{eqnarray}
Interval-Valued Extensions of Real Functions gives a way to find an enclosure of the range of a given real-valued function. Most generally, if we note by $[\mathbb{R}]$ the set of all finite intervals, we say that $ [f] :[\mathbb{R}]^n \rightarrow [\mathbb{R}]$ is an interval extension of $f:\mathbb{R}^n \rightarrow \mathbb{R}$ if $$[f](X)\supseteq \lbrace f(x): x\in X\rbrace$$     
where $X=(X_1,...,X_n)$ represents a vector of intervals. There are different kind of interval functional extensions; if we have the formula of a real-valued function $f$ then the natural interval extension is achieved by replacing the real variable $x$ with an interval variable $X$ and the real arithmetic operations with the corresponding interval operations. Another useful interval extension is the mean value form. Let $m=m(X)$ be the center of the interval vector $X$ and let $[\frac{\partial f_i}{\partial x_i}]$ be an interval extension of $\frac{\partial f_i}{\partial x_i}$ by the mean value theorem we have $$f(X)\subseteq [f_{mv}](X)= f(m)+ \sum^n_{i=1} [\frac{\partial f_i}{\partial x_i}](X)(X_i-m_i)$$
$[f_{mv}](X)$ is the mean value extension of $f$.

Let $[f_i](F_i^+), [f_i](F_i^-)$ be an interval extension of $f_i(F_i^+), f_i(F_i^-)$, then it is clear that if 
\begin{eqnarray}
& \sup \lbrace y : y \in [f_i](F_i^-) \rbrace \leq 0 \leq \inf  \lbrace y : y \in [f_i](F_i^+) \rbrace \label{PMintform3} \\     
& or \nonumber \\ 
& \sup \lbrace y : y \in [f_i](F_i^+) \rbrace \leq 0 \leq \inf  \lbrace y : y \in [f_i](F_i^-) \rbrace \label{PMintform4}
\end{eqnarray}
equations \ref{PMintform1} and \ref{PMintform2} are also true. So, in order to check the P.M. conditions along the edges we will compute equations \ref{PMintform3} and \ref{PMintform4}.        
 
Various interval-based software packages for \textsc{Matlab} are available, we have chosen the well known \textsc{INTLAB} toolbox \cite{INTLAB}. The toolbox has several interval class constructor for intervals, affine arithmetic, gradients, hessians, slopes and more. 
Ordinary interval arithmetic has sometimes problems with dependencies and wrapping effect given large enclosures of the range and therefore overestimating the sign behaviour. A way to fight with this is affine arithmetic. In affine arithmetic an interval is stored as a midpoint $X_0$ together with error terms $E_1,...,E_k$ and it represents $$X= X_0+ E_1U_1+E_2U_2+...+E_kU_k$$ where $U_1,...,U_k$ are parameters independently varying within $[-1,1]$. 
In case of get wrong signs for $f_i(F_i^-)$ and $f_i(F_i^+)$ and dismiss the possibility of a not very sharp estimation of IA we also compute the interval extension but now using the affine arithmetic.

Other way to improve the enclosure of the range and get sharper lower and upper bounds is throughout subdivision or refinements. In this methodology we perform subdivision of the domain and then we take the union of interval extensions over the elements of the subdivision; this procedure is called a refinement of $[f]$ over $X$. Let $N$ be a positive integer we define 
\begin{equation}
X_{i,j}=[\inf X_i +(j-1)\frac{w(X_i)}{N},\inf X_i + j\frac{w(X_i)}{N}] \ \ j=1,..,N \nonumber
\end{equation}
We have $X_i=\cup^N_{j=1} X_{i,j}$ and $w(X_{i,j})=\dfrac{w(X_i)}{N}$ and furthermore, 
\begin{equation}
X=\cup^N_{j_i=1}(X_{1,j_1},...,X_{n,j_n}) \ \mbox{with} \ w(X_{1,j_1},...,X_{n,j_n})=\dfrac{w(X)}{N} \nonumber. 
\end{equation}
The interval quantity $$[f]_N(X)= \cup^N_{j_i=1}[f](X_{1,j_1},...,X_{n,j_n})$$ is the refinement of $[f]$ over $X$.
   
The algorithms that we have performed to compute equations \ref{PMintform3} and \ref{PMintform4} combines all the above methodologies and were adapted from \cite{Moore2}. In the following steps we summarize the routines that we have performed.  
The mean value extension was implemented using an approximation of $[\frac{\partial f_i}{\partial x_i}]$ throughout the central finite difference of the natural interval extension of $f$, that is
\begin{equation}
[\frac{\partial f_i}{\partial x_i}](X)\approx \dfrac{[f](X+0.0001)-[f](X-0.0001)}{2 \ 0.0001}\nonumber. 
\end{equation}       
Algorithm \ref{mean_value_extension} shows the routine for the mean value extension.

\begin{algorithm}[H]\label{mean_value_extension}
\small
\caption{Function {\it meanValue}, Computes the mean value extension}
\KwData {$f,X$}
\KwResult {returns $Fmv$ the value for the mean value extension form for $f$ evaluated over the interval $X$.}
\mbox{m} $\leftarrow$ mid(X)\\ 
\mbox{fm} $\leftarrow$ $f(m)$\\ 
\mbox{derf} $\leftarrow$ $f(X+0.0001)-f(X-0.0001)/(2 * 0.0001)$\\ 
\mbox{Fmv} $\leftarrow$ fm + derf * (X-m)\\
\end{algorithm}

The refinement procedure was implemented twice, one for the case of mean value extension and other for the affine arithmetic implementation. Algorithm \ref{mean_value_refinement} computes the mean value extension over an uniform refinement of the interval $X$ with $N$ subintervals and Algorithm \ref{affine_interval_refinement} computes the natural extension using affine arithmetic.

\begin{algorithm}[H]\label{mean_value_refinement}
\small
\caption{Function {\it meanValueRefinement}, Computes the refinement procedure using mean value extension}
\KwData {$f,X,N$}
\KwResult {returns $Y$ the value for the mean value extension form for $f$ evaluated over a partition of $X$.}

\mbox{h} $\leftarrow$ $(\sup(X)-\inf(X))/N$\\
\mbox{xi} $\leftarrow$ $\inf(X)$\\
\mbox{x1} $\leftarrow$ xi\\
\For{$i=1:N$}{ xip1  $\leftarrow$ x1 + $i$*h \\
               Xs($i$) $\leftarrow$ infsup(xi,xip1) \  {\it $\setminus\setminus$ Interval class constructor for each subinterval.}  \\ 
               xi $\leftarrow$ xip1   }

Xs(N) $\leftarrow$ infsup($\inf(Xs(N)),\sup(X))$\\
\mbox{Y} $\leftarrow$  {\it meanValue}($f$,Xs(1))\\
\If {$N>1$} {\For{$i=2:N$}{\mbox{Y} $\leftarrow$ hull(Y,{\it meanValue}($f$,Xs($i$)) {\it $\setminus\setminus$ take the union of mean extension. } }  }
\end{algorithm}

\begin{algorithm}[H]\label{affine_interval_refinement}
\small
\caption{Function {\it affineIntervalRefinement}, Computes the refinement procedure using affine natural extension}
\KwData {$f,X,N$}
\KwResult {returns $Y$ the value for the affine natural extension form for $f$ evaluated over a partition of $X$.}

\mbox{h} $\leftarrow$ $(\sup(X)-\inf(X))/N$\\
\mbox{xi} $\leftarrow$ $\inf(X)$\\
\mbox{x1} $\leftarrow$ xi\\
\For{$i=1:N$}{ xip1  $\leftarrow$ x1 + $i$*h \\
               Xs($i$) $\leftarrow$ infsup(xi,xip1) \  {\it $\setminus\setminus$ Interval class constructor for each subinterval.}  \\ 
               xi $\leftarrow$ xip1   }

Xs(N) $\leftarrow$ infsup($\inf(Xs(N)),\sup(X))$\\
\mbox{Y} $\leftarrow$  $f$(affine(Xs(1)))\\
\If {$N>1$} {\For{$i=2:N$}{\mbox{Y} $\leftarrow$ hull(Y,$f$(affine(Xs($i$))) {\it $\setminus\setminus$ take the union of natural affine extension. } }  }
\end{algorithm}
	
Now we are ready to compute equations \ref{PMintform3} and \ref{PMintform4} using the above algorithms. Let $K^l=[a_{1_l},b_{1_l}]\times [a_{2_l},b_{2_l}]=I_{1_l}\times I_{2_l}$ be a member of the refinement $Q$ and let
\begin{equation}
 f_{11}=f_2(\cdot,a_{2_l}): [a_{1_l},b_{1_l}] \rightarrow \mathbb{R}, \  f_{12}=f_2(\cdot,b_{2_l}): [a_{1_l},b_{1_l}] \rightarrow \mathbb{R} \nonumber
\end{equation} 
\begin{equation}
  f_{21}=f_1(a_{1_l},\cdot): [a_{2_l},b_{2_l}]\rightarrow \mathbb{R}, \ f_{22}=f_1(b_{1_l},\cdot): [a_{2_l},b_{2_l}]\rightarrow \mathbb{R} \nonumber
\end{equation} 

be the coordinate functions on the edges of $K^l$ ($l=1...4$), Algorithm \ref{Compute_Sign_K^l} summarizes the routine that we have performed using IA in order to compute the sign along the edges.      

\bigskip

\begin{algorithm}[H]\label{Compute_Sign_K^l}
\small
\caption{Function {\it posneg}, Computes the sign along the edges of $K^l$}
\KwData{$f_{ij}$ ,$I_{il}=[a_{i_l},b_{i_l}]$, N }
\KwResult{returns sign$f_{ij}$, the sign of $f_{ij}$ on $I_{il}$, 1 means positive, -1 negative and NaN indicates an empty output when the sign is not constant.}
\mbox{Dom} $\leftarrow$ infsup($a_{i_l},b_{i_l}$) \ \ {\it $\setminus\setminus$ interval class constructor} \\
\mbox{Fmv} $\leftarrow$ {\it meanValueRefinement}($f_{ij}$,Dom,N) \ \ {\it $\setminus\setminus$ computes the mean interval extension}  \\
\mbox{extmin} $\leftarrow$ $\inf$(Fmv) \\
\mbox{extmax} $\leftarrow$ $\sup$(Fmv) \ \ {\it $\setminus\setminus$ computes the max and min of the mean extension}\\
\If {\mbox{\rm{extmin}} $\geq 0$} { 
    sign$f_{ij}$ $\leftarrow$ 1 \\
    \mbox{return} }

\If {\mbox{\rm{extmax}} $\leq 0$} { 
    sign$f_{ij}$ $\leftarrow$ -1 \\
    \mbox{return} }
    
\mbox{aff} $\leftarrow$ {\it affineIntervalRefinement}($f_{ij}$,Dom,N) \ \  {\it $\setminus\setminus$ computes the affine interval extension}\\
\mbox{extmin} $\leftarrow$ $\inf$(Fmv) \\
\mbox{extmax} $\leftarrow$ $\sup$(Fmv) \ \ {\it $\setminus\setminus$ computes the max and min of the affine extension}\\
\If {\mbox{\rm{extmin}} $\geq 0$} { 
    sign$f_{ij}$ $\leftarrow$ 1 \\
    \mbox{return} }

\If {\mbox{\rm{extmax}} $\leq 0$} { 
    sign$f_{ij}$ $\leftarrow$ -1 \\
    \mbox{return} }

sign$f_{ij}$ $\leftarrow$ NaN    
\end{algorithm}
  
Algorithm \ref{bisection Algorithm} summarizes the implementation of the Bisection Algorithm that we have performed in \textsc{Matlab}. 

\begin{algorithm}[H]\label{bisection Algorithm}
\footnotesize
\caption{Bisection  algorithm}
\KwData{$K_0$, F$=(f_1,f_2)$ system to solve, DF Jacobian of F, $\delta$, N}
\KwResult{$c$ root's approximation}
\eIf{$K_0$ \mbox{\rm verifies P.M. } } {$c \leftarrow $ center of $K_0$;\\
   error $\leftarrow \Vert \mbox{F}(c) \Vert$;\\ 
   stop $\leftarrow$ 1;\\   
   F1orig $\leftarrow$ $f_1$;\\
   F2orig $\leftarrow$ $f_2$;\\   
   \While{ \mbox{\rm (error} $\ >$  $\delta$ \mbox{\rm )}  $\wedge$ \mbox{\rm (stop} $\ < 3 )$ } {        ($K^1,K^2,K^3,K^4$) $\leftarrow$ Generate a refinement of $K_0$ throughout $c$;\\
(sign$f_{11}$,sign$f_{12}$,sign$f_{21}$,sign$f_{22}$) $\leftarrow$ {\it posneg} ( $f_{ij},I_{i1}$,N) \ $i,j=1,2$ {\it $\setminus\setminus$ Here we use Algorithm \ref{Compute_Sign_K^l} on each edge of $K^1$ }  \\		                                             
                                                stop $\leftarrow$ stop+1;\\ 
                     \If{ \mbox{\rm sign}$f_{11}$\mbox{\rm sign}$f_{12}\leq 0$ $\wedge$ \mbox{\rm sign}$f_{21}$\mbox{\rm sign}$f_{22}\leq 0$  } {$c \leftarrow$ center of $K^1$;\\
                                             $K_0\leftarrow K^1$;\\
                                               error $\leftarrow \Vert \mbox{F}(c)\Vert$;\\
                                               stop $\leftarrow$ stop-1 ;\\                                                
                                                Pass to next iteration  }                                              
                                            
                                            (sign$f_{11}$,sign$f_{12}$,sign$f_{21}$,sign$f_{22}$) $\leftarrow$ {\it posneg} ( $f_{ij},I_{i2}$,N) \ $i,j=1,2$ ;  {\it $\setminus\setminus$ Here we use Algorithm \ref{Compute_Sign_K^l} on each edge of $K^2$ } \\
                                            \If{ \mbox{\rm sign}$f_{11}$\mbox{\rm sign}$f_{12}\leq 0$ $\wedge$ \mbox{\rm sign}$f_{21}$\mbox{\rm sign}$f_{22}\leq 0$ } {$c \leftarrow$ center of $K^2$;\\
                                             $K_0\leftarrow K^2$;\\
                                               error $\leftarrow \Vert \mbox{F}(c)\Vert$;\\
                                               stop $\leftarrow$ stop-1 ;\\                                                
                                                Pass to next iteration   }                                              
                                             
                                         (sign$f_{11}$,sign$f_{12}$,sign$f_{21}$,sign$f_{22}$) $\leftarrow$ {\it posneg} ( $f_{ij},I_{i3}$,N) \ $i,j=1,2$;  {\it $\setminus\setminus$ Here we use Algorithm \ref{Compute_Sign_K^l} on each edge of $K^3$ }  \\                                             
                                           \If{\mbox{\rm sign}$f_{11}$\mbox{\rm sign}$f_{12}\leq 0$ $\wedge$ \mbox{\rm sign}$f_{21}$\mbox{\rm sign}$f_{22}\leq 0$ } {$c \leftarrow$ center of $K^3$;\\
                                              $K_0\leftarrow K^3$;\\
                                               error $\leftarrow \Vert \mbox{F}(c)\Vert$;\\
                                                stop $\leftarrow$ stop-1 ;\\                                                
                                                Pass to next iteration   }                                              
                                      
                                         (sign$f_{11}$,sign$f_{12}$,sign$f_{21}$,sign$f_{22}$) $\leftarrow$ {\it posneg} ( $f_{ij},I_{i4}$,N) \ $i,j=1,2$;  {\it $\setminus\setminus$ Here we use Algorithm \ref{Compute_Sign_K^l} on each edge of $K^4$ }  \\
                                          \If{ \mbox{\rm sign}$f_{11}$\mbox{\rm sign}$f_{12}\leq 0$ $\wedge$ \mbox{\rm sign}$f_{21}$\mbox{\rm sign}$f_{22}\leq 0$ } {$c \leftarrow$ center of $K^4$;\\
                                              $K_0\leftarrow K^4$;\\
                                               error $\leftarrow \Vert \mbox{F}(c)\Vert$;\\
                                               stop $\leftarrow$ stop-1 ;\\                                                
                                                Pass to next iteration   }                                              
                                     {\it $\setminus\setminus$ Generate the preconditioning system $\mbox{\bf{G}}(X)$ }\\
                                     DFc $\leftarrow$ DF($c$);\\
                                     invDFc $\leftarrow$ inv(DFc);\\    
                                     $f_1 \leftarrow$ invDFc(1,1)*F1orig+invDFc(1,2)*F2orig;
                                     \ \\  
                                     $f_2 \leftarrow$ invDFc(2,1)*F1orig+invDFc(2,2)*F2orig; \\
                                                                                                                                 
                                              }    } {
\Return Wrong $R_0$}
\end{algorithm}
In order to check the accuracy and performance of the algorithm, we test it throughout different systems of equations. We start testing the algorithm in the system given in Example \ref{conv_example}. We took as our starting guess the rectangle $K_0=[0,1]\times [0,1]$, in Table \ref{tab1} we show the behaviour of the sequence throughout different tolerance levels and in Figure \ref{Quadrisection_example1} we illustrate the procedure for tolerance level $10^{-15}$. We have chosen to use 10 digits in the mantissa representation for the root's approximation and its evaluation and 5 digits for the norm evaluation notation. Since the system always verifies the P.M. the algorithm never performs preconditioning.      

\begin{figure}[H]
\begin{minipage}[c]{0.7\textwidth}
\begin{tabular}{| c | @{\vrule height 0pt depth 0pt width 0.1in } l | @{\vrule height 0pt depth 0pt width 0.1in} l | c @{\vrule height 0.14in depth 0in width 0pt}|}\hline    
$\delta$ & $ \ c$ & $\Vert \mbox{\bf{F}}(c)\Vert$ & $\mbox{\rm iter }$\\\hline \hline
$1$                              &   0.500000000  & 0.2788  &        1   \\
                                 &   0.500000000  &         &           \\ \hline
$10^{-1}$                        &   0.062500000  &  0.0586 &        4  \\                             
                                 &   0.937500000  &         &            \\ \hline
 $10^{-2}$                       &   0.007812500  &  0.0077 &        7   \\
                                 &   0.992187500  &         &         \\ \hline
 $10^{-5}$                       &   0.000007629  & 7.6293 1e-06 &  17      \\
                                 &   0.999992370  &                    &        \\ \hline
 $10^{-10}$                      &   5.820799999 1e-11  &  5.8207 1e-11&  34 \\ 
                                 &   0.999999999  &                    &      \\ \hline          
$10^{-15}$                       &   0.000000001 1e-06  &  8.8817 1e-16&  50    \\
                                 &   0.999999999        &              &    \\  \hline                     
\end{tabular}
\captionsetup{width=0.6\linewidth}
\captionof{table}{Evolution of root's approximation, norm evaluation and steps performed throughout different tolerance levels.}\label{tab1}
\end{minipage}
\begin{minipage}[c]{0.1\textwidth}
\includegraphics[height=1.8in,width=1.8in]{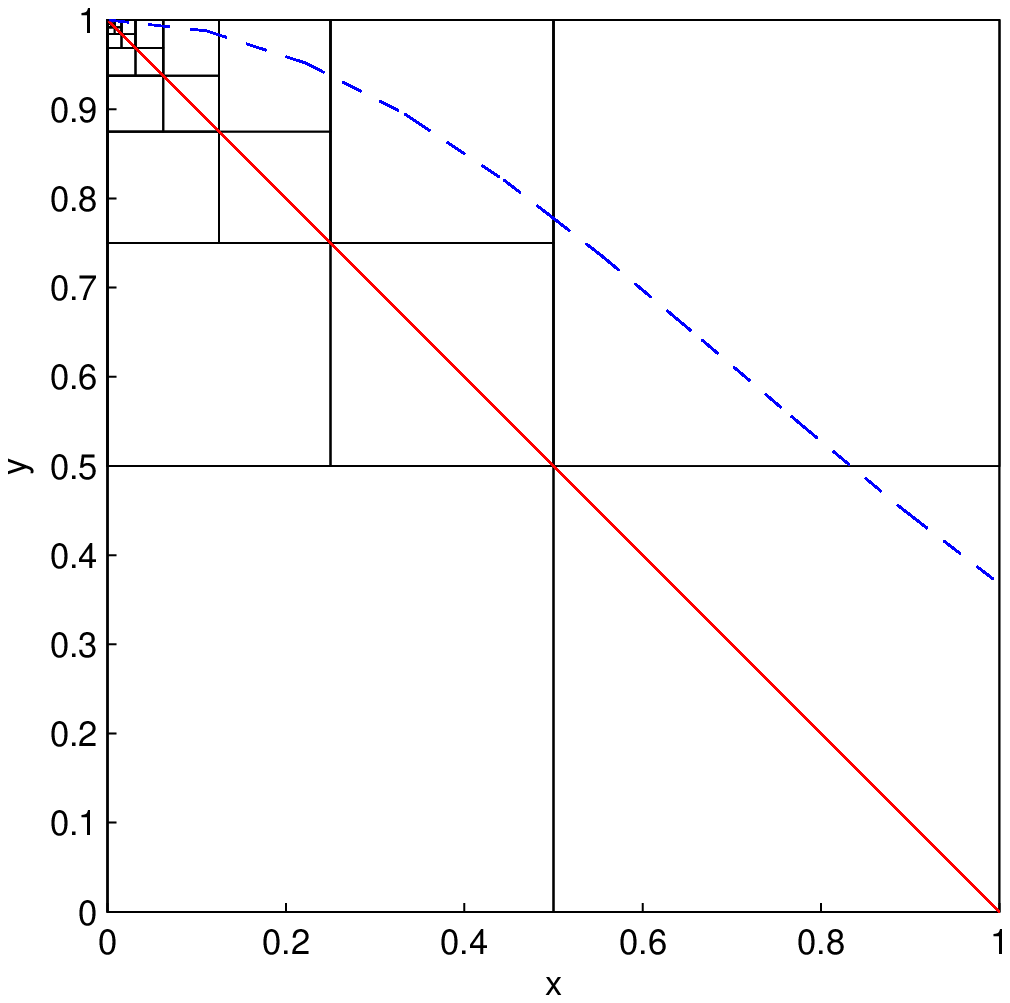} 
\captionsetup{width=1.8in}
\captionof{figure}{Quadrisection procedure.}\label{Quadrisection_example1}
\end{minipage}
\end{figure}

In the following steps we test the algorithm in more difficult problems, we will see that in some systems the algorithm needs to preconditioning in order to guarantee the P.M. conditions throughout the refinement. Let,    
\begin{align*}
&\mbox{\bf{F}}_1(x,y) = (x^2+y^2-1,x-y^2 ) \\ 
&\mbox{\bf{F}}_2(x,y) = (2x-y-e^{-x} , -x+2y-e^{-y}) \\ 
&\mbox{\bf{F}}_3(x,y) = (\sin(x)+\cos(y)+2(x-1) , y-0.5(x-0.5)^2-0.5)\\
&\mbox{\bf{F}}_4(x,y)= (x^2-\cos(xy) , e^{xy}+y )\\
&\mbox{\bf{F}}_5 (x,y)= (x\cos(y)+y\sin(x)-0.5 , e^{e^{-(x+y)}}-y(1+x^2))\\
&\mbox{\bf{F}}_6 (x,y)= (x+5(x-y)^3-1 ,0.5(y-x)^3+y)\\
\end{align*}
be the testing maps. In Table \ref{tab2} we show the numerical performance for the testing maps and in Figure \ref{Quadrisection_example2} we illustrate the algorithm behaviour with the refinement procedure. The systems of equations and their successive possibles preconditioning are represented by a zero contour level on an mesh on the initial guess $K_0$ and the refinement procedure was illustrated using the rectangle \textsc{Matlab}'s functions. The method was implemented setting the tolerance level in $\delta=10^{-15}$ and the interval analysis refinement in N$=3$.    
\begin{table}[H]
\begin{center}
\begin{tabular}{|c|c|c|c|c|}\hline     
$\mbox{\bf{F}}$ & $c$ & $\mbox{\bf{F}}(c)$ & iter & $K_0$ \\\hline \hline

$\mbox{\bf{F}}_1$ & \ 0.618033988749895 &  \  0.004965068306495 1e-14  & 51  & $[0,1] \times [0,1] $                                          \\  
 				  & \ 0.786151377757422 &    -0.123942463016433 1e-14   &   &  \\ \hline

$\mbox{\bf{F}}2$ & \  0.567143290409784 &  \  0.111022302462516 1e-15 &  50 & $[0,1] \times [0,1] $                                          \\  
			      &  \ 0.567143290409784 &  \ 0.111022302462516 1e-15 &    &  \\ \hline

$\mbox{\bf{F}}_3$ & \ 0.378316940137480  & \  0.139577647543639 1e-15  & 51  & $[0,1]\times [0,1]$                        \\
           		  & \ 0.507403383528753  &   -0.072495394968176 1e-15 &	   &   \\  \hline 

$\mbox{\bf{F}}_4$ & \  0.926174872358938 & \  0.129347223584252 1e-15  &  49 & $[0,1]\times [-1,0]$                        \\
          		  & \ -0.582851662173280 &   -0.115653908517277 1e-15 &   & \\ \hline

$\mbox{\bf{F}}_5$ &  \ 0.353246619596717 &   -0.244439451327881 1e-15 & 52 &  $[0,1.1]\times [0,2]$                       \\
             	  &  \ 0.606081736641465 & \  0.047257391058546 1e-15 &    &  \\ \hline 

$\mbox{\bf{F}}_6$ &  \ 0.510030862987151 &   -0.045236309398304 1e-13 & 42 & $[0.4,1]\times [0,0.4]$                       \\
             	  &  \ 0.048996913701194 &   -0.904901681894059 1e-13 &    &   \\ \hline \hline

\end{tabular}
\end{center}
\caption{Root's approximation, evaluation, iteration performed and initial guess for testing maps. }\label{tab2}
\end{table}

\begin{figure}[H]  
\begin{center}
\begin{tabular}{lll}
\includegraphics[height=1.8in,width=1.8in]{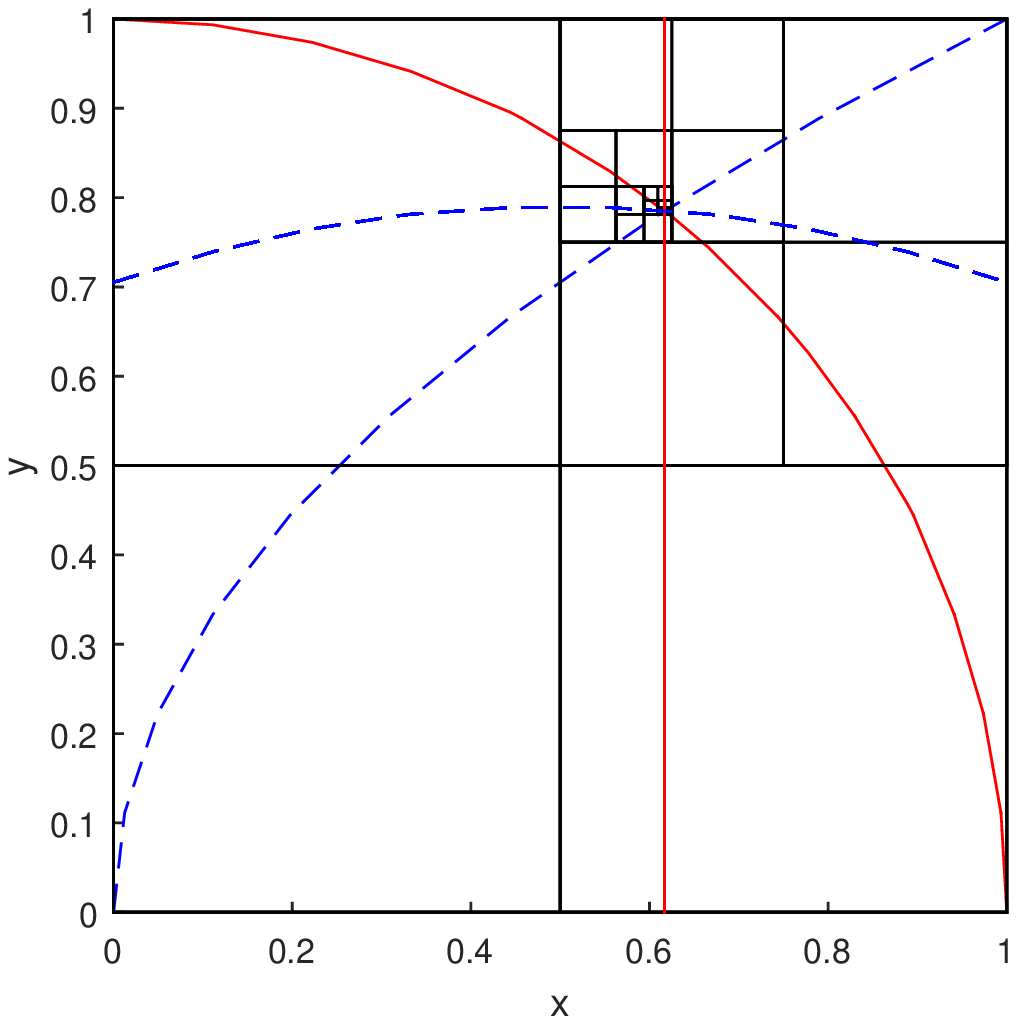} 
\includegraphics[height=1.8in,width=1.8in]{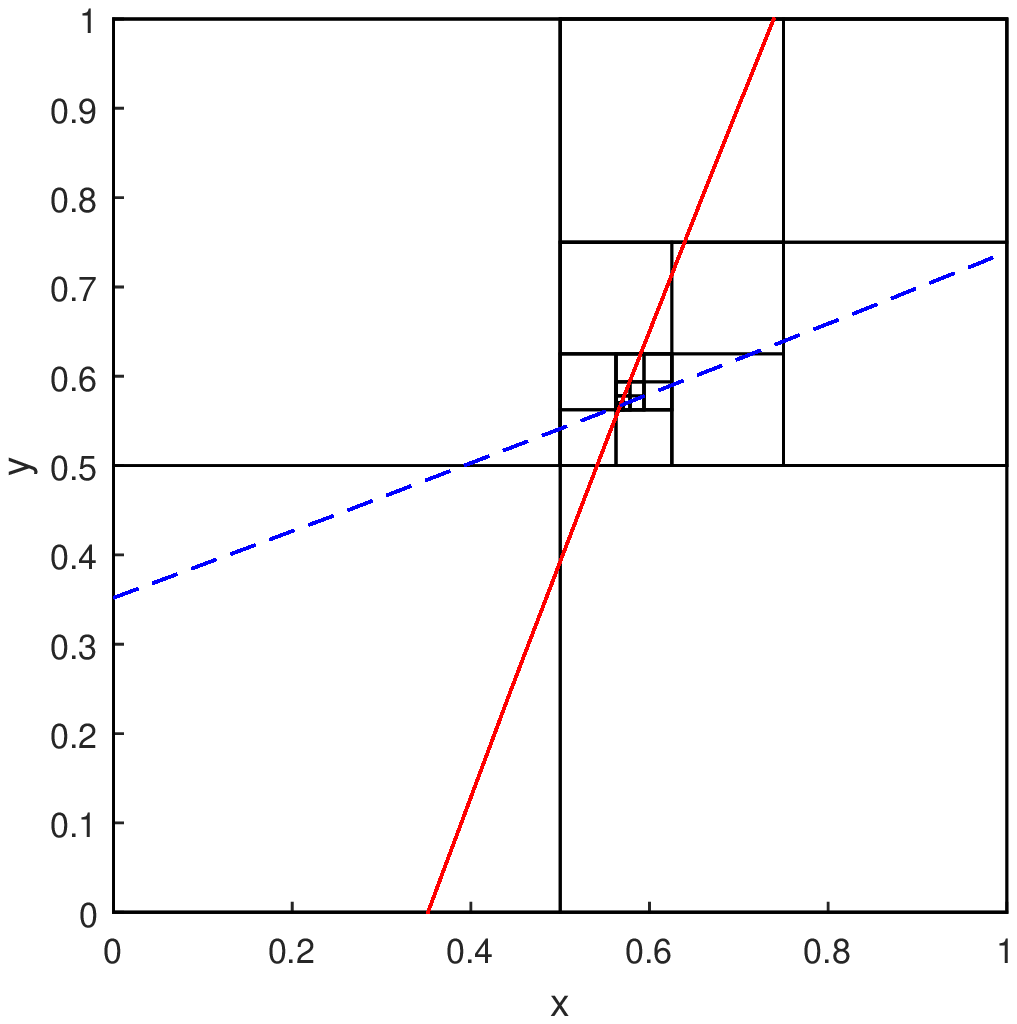} 
\includegraphics[height=1.8in,width=1.8in]{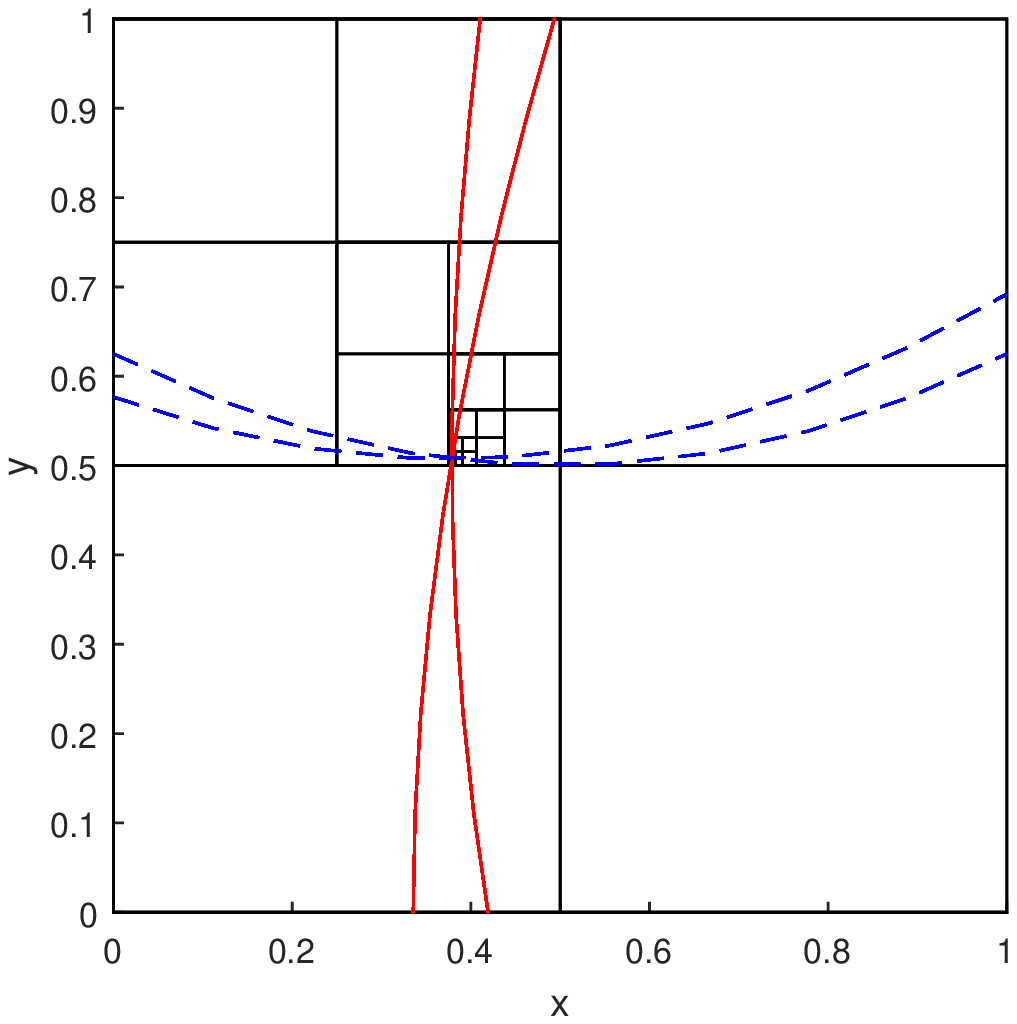}\\ 
\includegraphics[height=1.8in,width=1.8in]{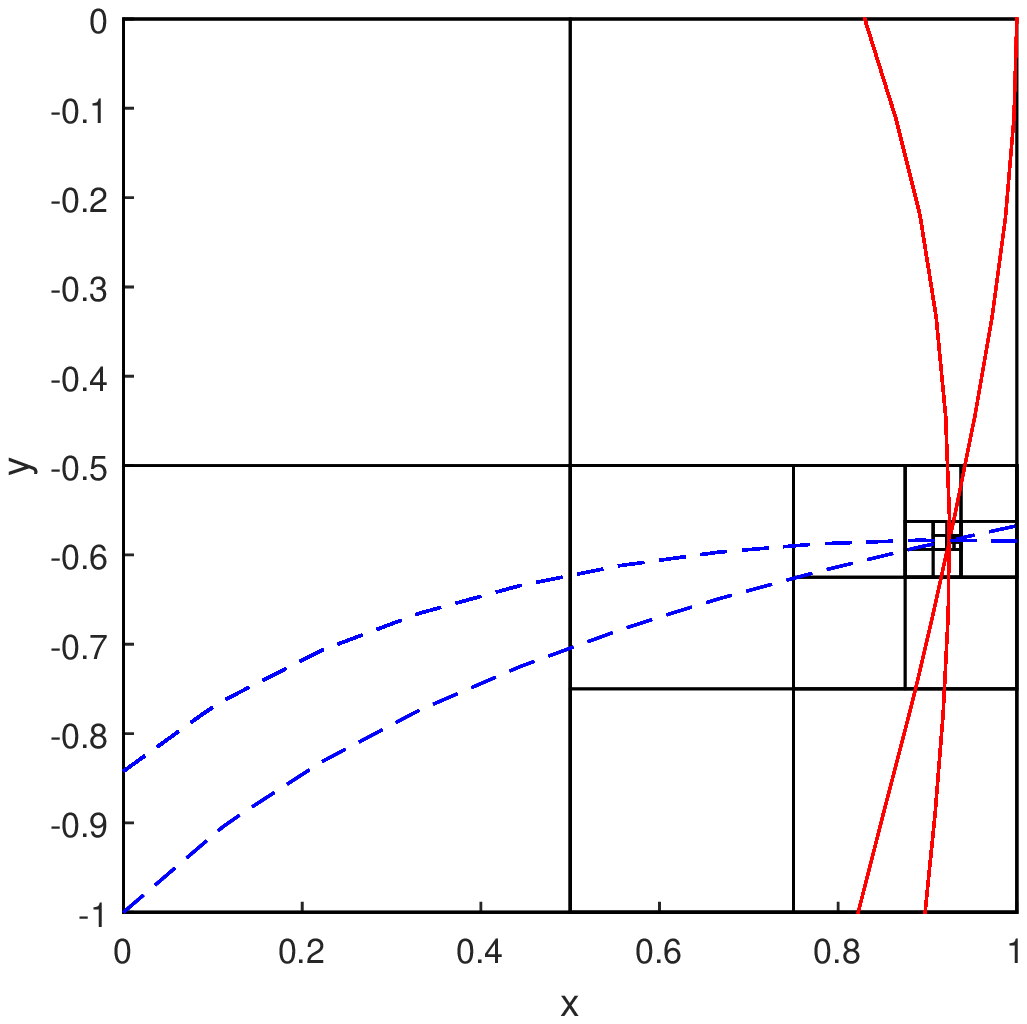}
\includegraphics[height=1.8in,width=1.8in]{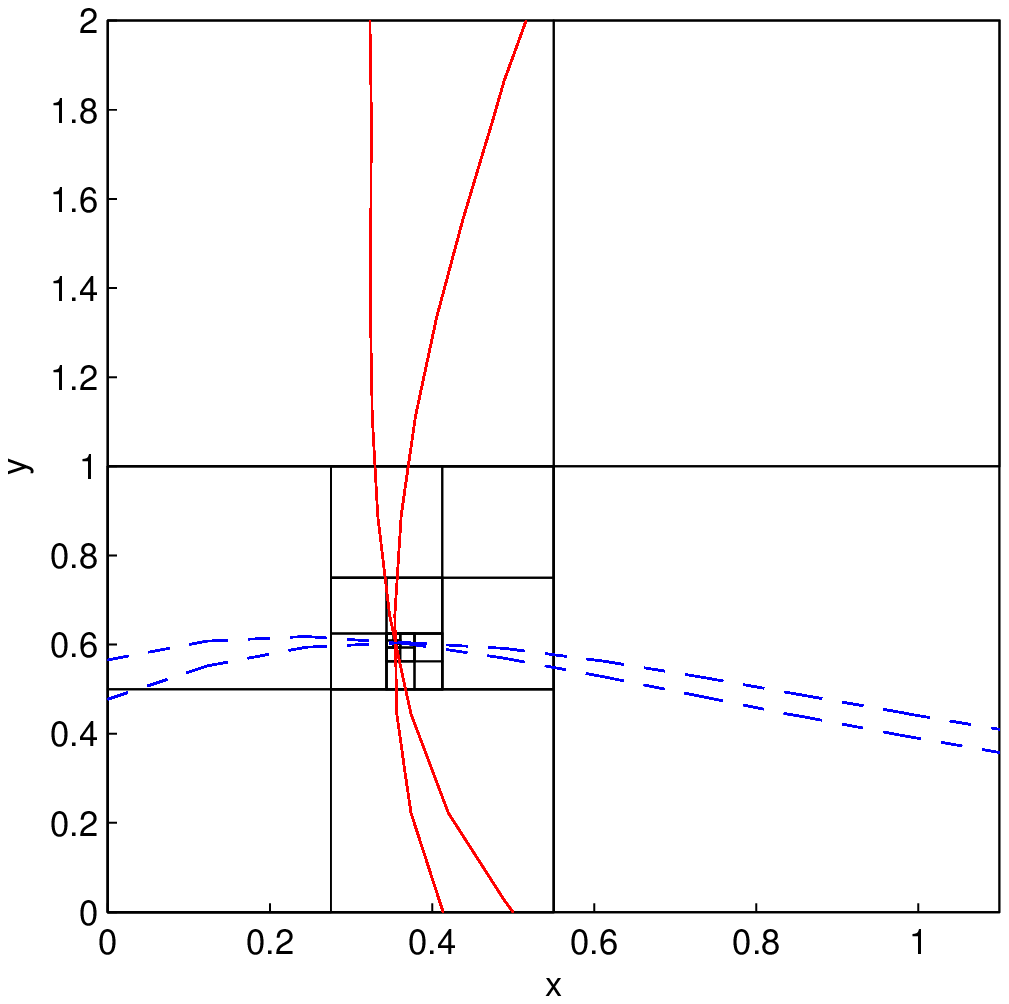}
\includegraphics[height=1.8in,width=1.8in]{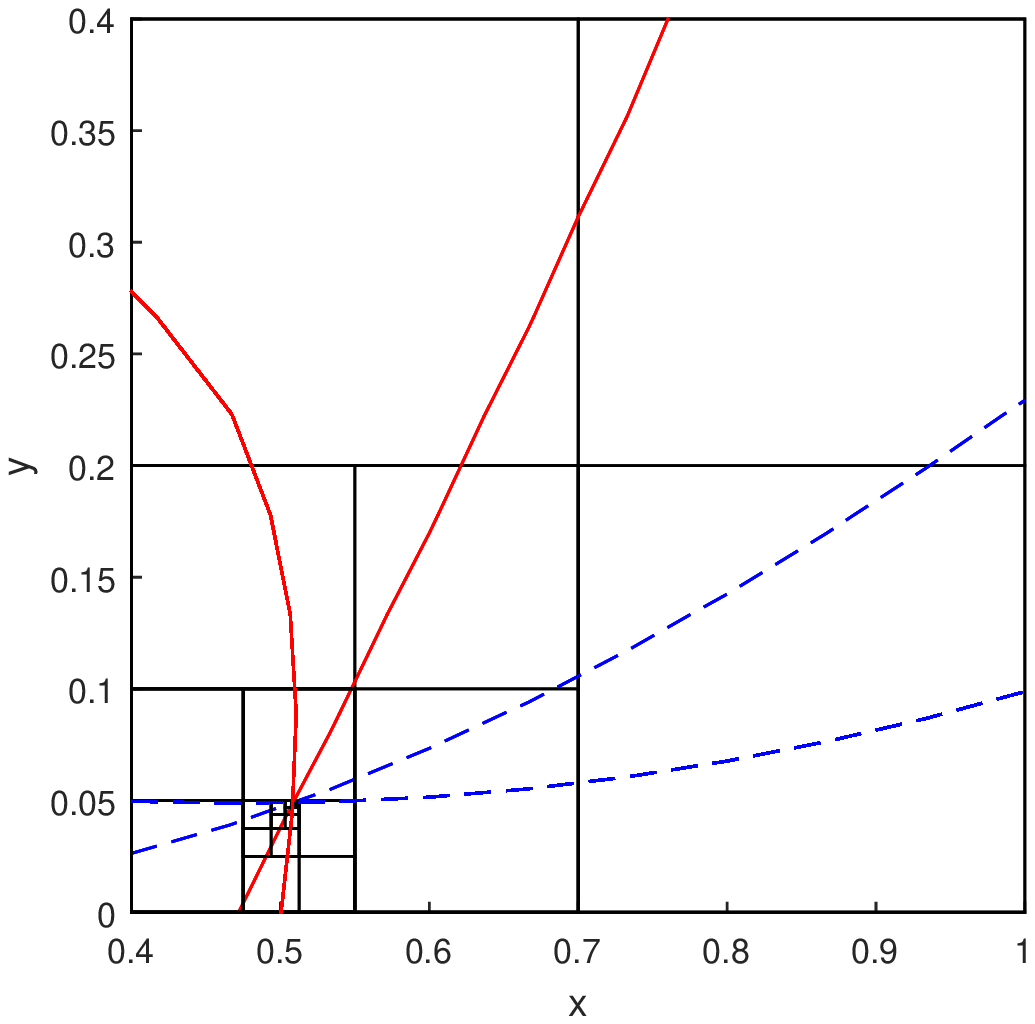}
\end{tabular}

\caption{The first row illustrates the algorithm procedure for $\mbox{\bf{F}}_1,\mbox{\bf{F}}_2, \mbox{\bf{F}}_3   $ and the second for $\mbox{\bf{F}}_4, \mbox{\bf{F}}_5, \mbox{\bf{F}}_6$. The solid red line represents the first coordinate and the dashed blue line represents the second coordinate for the equivalent system $\mbox{\bf{G}}_k(X)=0$.}\label{Quadrisection_example2}
\end{center}
\end{figure}

\section{Conclusion}

In this work we have clarified how a multidimensional bisection algorithm should be performed extending the idea of the classic one dimensional bisection algorithm. Due by the preconditioning in each step we could prove a local convergence theorem and we also found an error estimation. Interval Analysis allowed a fast and reliable way of computing the Poincar\'e-Miranda conditions and the numerical implementation showed that the method has a very good accuracy similar with the classic methods like Newton or continuous optimization.

\end{document}